\newtheorem{theorem}{Theorem}
\newtheorem{lemma}{Lemma}
\newcommand{\by}{\mathbf{y}}
\newcommand{\ba}{\mathbf{a}}
\newcommand{\bs}{\mathbf{s}}
\newcommand{\bu}{\mathbf{u}}
\newcommand{\bv}{\mathbf{v}}
\newcommand{\bz}{\mathbf{z}}
\newcommand{\bC}{\mathbf{C}}
\newcommand{\bG}{\mathbf{G}}
\newcommand{\bK}{\mathbf{K}}
\newcommand{\bM}{\mathbf{M}}
\newcommand{\bR}{\mathbf{R}}
\newcommand{\bU}{\mathbf{U}}
\newcommand{\bV}{\mathbf{V}}
\newcommand{\br}{\mathbf{r}}
\newcommand{\bw}{\mathbf{w}}
\newcommand{\bx}{\mathbf{x}}
\newcommand{\bPhi}{\boldsymbol{\Phi}}
\newcommand{\bpsi}{\boldsymbol{\psi}}
\def\minwrt[#1]{\underset{#1}{\mathrm{minimize }}}
\def\argminwrt[#1]{\underset{#1}{\text{arg min }}}
\def\argmaxwrt[#1]{\underset{#1}{\text{arg max }}}
\def\maxwrt[#1]{\underset{#1}{\text{maximize }}}
\def\maxemphwrt[#1]{\underset{#1}{\text{\emph{maximize} }}}
\newcommand{\ett}{{\bf 1}}
\def\mminwrt[#1]{\underset{#1}{\mathrm{min }}}
\def\infwrt[#1]{\underset{#1}{\mathrm{inf }}}
\newcommand{\mR}{{\mathbb R}}
\newcommand{\cM}{\mathcal{M}}
\newcommand{\cT}{\mathcal{T}}
\newcommand{\cX}{\mathcal{X}}
\newcommand{\norm}[1]{\left\lVert#1\right\rVert}
\def\RC{{\mathbb{C}}}
\def\RR{{\mathbb{R}}}
\def\RN{{\mathbb{N}}}
\newcommand{\expop}{\mathbb{E}}
\newcommand{\expect}[1]{\expop\left(#1\right)}
\newcommand{\blambda}{\boldsymbol{\lambda}}
\newcommand{\bxi}{\boldsymbol{\xi}}
\newcommand{\fwdop}[1]{G_{#1}} 
\newcommand{\freqspace}{\mathbb{T}}  
\newcommand{\angspace}{\Theta}  
\newcommand{\measure}{\mathcal{M}_+}  
 \newcommand{\indset}[1]{[\![#1]\!]}
\newcommand\blfootnote[1]{%
  \begingroup
  \renewcommand\thefootnote{}\footnote{#1}%
  \addtocounter{footnote}{-1}%
  \endgroup
}
\newcommand{\compress}{\vspace{-4pt}}
\newcommand{\compressmini}{\vspace{-2pt}}
\begin{document}
\title{Multi-frequency tracking via group-sparse optimal transport}

\author{Isabel Haasler and Filip Elvander 
}

\maketitle

\begin{abstract}
In this work, we introduce an optimal transport framework for inferring power distributions over both spatial location and temporal frequency.
Recently, it has been shown that optimal transport is a powerful tool for estimating spatial spectra that change smoothly over time. In this work, we consider the tracking of the spatio-temporal spectrum corresponding to a small number of moving broad-band signal sources. Typically, such tracking problems are addressed by treating the spatio-temporal power distribution in a frequency-by-frequency manner, allowing to use well-understood models for narrow-band signals. This however leads to decreased target resolution due to inefficient use of the available information. We propose an extension of the optimal transport framework that exploits information from several frequencies simultaneously by estimating a spatio-temporal distribution penalized by a group-sparsity regularizer. This approach finds a spatial spectrum that changes smoothly over time, and at each time instance has a small support that is similar across frequencies. To the best of the authors’ knowledge, this is the first formulation combining optimal transport and sparsity for solving inverse problems. As is shown on simulated and real data, our method can successfully track targets in scenarios where information from separate frequency bands alone is insufficient.
\blfootnote{This work was supported by the Knut and Alice Wallenberg Foundation under grant KAW 2021.0274.}
\blfootnote{ I.~Haasler is with Signal Processing Laboratory, LTS 4, EPFL,
Lausanne, Switzerland. {\tt\small isabel.haasler@epfl.ch} }
\blfootnote{F.~Elvander is with the Dept. of Information and Communications Engineering, Aalto University, Finland. {\tt\small filip.elvander@aalto.fi} }

\end{abstract}

\section{Introduction}

Spectral estimation appears in a variety of control and signal processing applications, ranging from fault detection \cite{trachi2016induction} to noise reduction and speech enhancement \cite{Gannot2017}. For wide-sense stochastic processes, commonly employed as a signal model, the (temporal) spectrum parametrizes the covariance function and describes the distribution of power over frequency \cite{Grenander1958,ElvanderK23_71}. Analogously, in multi-sensor or array processing scenarios, the spatial spectrum gives the distribution of power on the spatial domain and parametrizes the array or spatial covariance matrix \cite{VanTrees2002}.  Commonly in applications, it is assumed that such spatial spectra correspond to temporal signals supported on a single carrier frequency, allowing for representing time-delays by wave-form phase-shifts. Although such narrow-band assumptions do not hold for scenarios with broad-band sources, approximations can be constructed by means of filter banks or the short-time Fourier transform \cite{Boehme1986}. However, as the resulting narrow-band signals are typically processed independently, information common for sets of frequencies, such as spectral coherence, is ignored \cite{WeissK02_dsp}.

In this letter, we consider the problem of spatial spectral estimation for broad-band sources or targets. Furthermore, we are interested in the case when the scene is observed at a sequence if time instances, between which the location of the targets change in a smooth fashion. In recent works, we have developed a framework for spectral estimation building on the concept of optimal transport (OT)
\cite{elvander2020multi,elvander2018tracking}.
In this setting the geometric property of OT to capture smooth shifts in distributions, i.e., spectral energy content, has shown to be a powerful tool for target tracking.
OT has also found applications in various other control applications \cite{chen2021optimal}, e.g., in control and estimation for multi-agent systems \cite{krishnan2018distributed, haasler2021control}, and uncertainty quantification \cite{aolaritei2022uncertainty}. 

In the case of spatial spectral estimation, 
our previous work has been limited to narrow-band scenarios, i.e., with signals supported on a single carrier frequency.
In this work, we develop these concepts further, and in particular to scenarios with broad-band sources. Here, all available information, corresponding to several observation times and temporal frequencies, is used in estimation of the spatio-temporal spectrum, i.e., a distribution of power over both spatial domain and temporal frequency. We propose to achieve this information sharing between frequencies by imposing the assumption of spatial sparsity: as signal sources should be relatively few, the support of each spatial spectrum should be small. Furthermore, we present an efficient algorithm with linear convergence rate implementing our proposed estimator. \looseness=-1

To the best of the authors' knowledge, this is also the first time in which OT and sparsity-inducing penalties are used jointly for solving inverse problems.

\section{Background}

\subsection{Spatio-temporal estimation}
Consider a scenario in which a superposition of broad-band signals, emitted by a set of spatially localized sources in the far-field, impinge on an array of $Q \in \RN$ sensors. Let the corresponding sensor array signal be
\begin{align*}
	\by(t) = \begin{bmatrix} y_1(t) & \ldots & y_Q(t)\end{bmatrix}^T \in \RC^Q \;,\; t \in \RR.
\end{align*}
Then, modeling the sources as wide-sense stationary processes, we seek a spatio-temporal spectrum describing the distribution of signal power over look-angle\footnote{To simplify the exposition, but without loss of generality, we here let the spatial domain correspond to direction-of-arrival (DoA).} and temporal frequency. That is, letting $\angspace$ and $\freqspace$ denote the angle and frequency spaces respectively, we seek \mbox{$\Phi \in \measure(\angspace \times \freqspace)$}. Furthermore, consider passing each sensor signal $y_k(t)$ through a filter bank\footnote{Equivalently, this can be performed as a decomposition using the short-time Fourier transform as is common in audio signal processing.} of $F$ narrow-band filters with center frequencies $\omega_f$, $f \in \indset{F} = \{1,\ldots,F\}$, yielding the set of narrow-band sensor signals
\begin{align*}
	\by_f(t) = \begin{bmatrix} y_{1,f}(t) & \ldots & y_{Q,f}(t)\end{bmatrix}^T \in \RC^Q \;,\; f \in \indset{F}.
\end{align*}
With this, the so-called spatial covariance matrix for carrier frequency $\omega_f$ is given by
\begin{align}\label{eq:measurement_eq}
	\bR_f \!\triangleq \!\expect{\by_f\by_f} \!=\! \int_{\angspace} \ba_f(\theta) \ba_f(\theta)^H \Phi_f(\theta)d\theta = \fwdop{f}(\Phi_f)
\end{align}
where $\expect{\cdot}$ denotes the expectation operation, and where we have defined the set of linear operators $\fwdop{f}: \cM_+(\angspace) \to \RC^{Q\times Q}$. Here, the vector functions $\ba_f: \angspace \to \RC^{Q}$ denotes the array response at carrier frequency $\omega_f$, encoding the array geometry, as well as filter response and propagation properties of the space. Then, given $\bR_f$, $f \in \indset{F}$, or estimates thereof, we seek to estimate $\Phi$, or more precisely  $\{ \Phi_f \}_{f=1}^F = \{ \Phi(\omega_f,\cdot) \}_{f=1}^F$.

\subsection{Spectral tracking via  optimal transport}

Let $\Phi_0, \Phi_1 \in \cM_+(X)$ be two non-negative distributions, i.e., generalized functions, on a space $X$.
The optimal transport problem \cite{peyre2019computational, villani2021topics} is to transform $\Phi_0$ into $\Phi_1$ in the most efficient way, where efficiency is measured in terms of a cost function $c:X\times X \to \mR$, and where $c(x_0,x_1)$ denotes the cost for moving a unit mass from $x_0\in X$ to $x_1\in X$.
OT finds a so-called transport plan, which is a bi-variate distribution $m \in \cM_+(X\times X)$, that minimizes
\begin{equation} \label{eq:ot} 
\begin{aligned}
S(\Phi^{(0)}\!,\Phi^{(1)} )  = \!\!\!\! \mminwrt[m \in \cM_+(X\times X)] & \int_{X\times X} \!\!\!\! c(x_0,x_1) m(x_0,x_1) dx_0 dx_1 \\
\text{subject to }\ & \int_X m(x_0,x_1) dx_1 = \Phi^{(0)}(x_0)\\
& \int_X m(x_0,x_1) dx_0 = \Phi^{(1)}(x_1)
\end{aligned}
\end{equation}
The objective value of \eqref{eq:ot} can be interpreted as a measure of distance between the distributions $\Phi^{(0)}$ and $\Phi^{(1)}$, quantifying how much the mass has to be moved in order to transform $\Phi^{(0)}$ into $\Phi^{(1)}$.
This property has recently proven useful in the setting of tracking spatial spectra based on covariance measurements as in \eqref{eq:measurement_eq}.
Namely, with $X = \angspace$, in \cite{elvander2018tracking, elvander2020multi} the OT distance \eqref{eq:ot} is used as a regularizing term to find spectral estimates whose mass moves smoothly between consecutive time points, which results in the formulation 
\begin{equation} \label{eq:tracking_bimarginal}
	\minwrt[\Phi^{(t)}, t \in\indset{\cT} ]\;   \sum_{t=1}^\cT \|  \fwdop{} (\Phi^{(t)}) - \bR^{(t)} \|_2^2 + \sum_{t=1}^{\cT-1}\! S(\Phi^{(t)},\Phi^{(t+1)} ).
\end{equation}
Moreover, the problem can equivalently be posed as a so-called multi-marginal OT problem over the product space $\cX = \angspace^\cT := \angspace\times \dots \times \angspace$  \cite{elvander2020multi}.
In this setting, $c$ and $m$ are a $\cT$-variate function and distribution, respectively, where $c(x)$ and $m(x)$ denote the cost and amount of transport associated with a tuple $x=(x_1,\dots,x_T)$.
The multi-marginal formulation of \eqref{eq:tracking_bimarginal} reads
\begin{equation} \label{eq:tracking_multi}
	\minwrt[m \in \cM_+(\cX)]  \int_\cX \!\!\! m(x) c(x) dx + \sum_{t=1}^\cT \| \fwdop{} (P^{(t)}(m)) - \bR^{(t)} \|_2^2,
\end{equation}
where the cost function decouples into pairwise interactions,
\begin{equation} \label{eq:cost_seq}
	c(x_0,\dots,x_T) = \sum_{t=1}^{\cT-1} c(x_t,x_{t+1}),
\end{equation}
and $P^{(t)}(m) \in \cM_+(X)$ denotes projections of the transport plan, defined as
\begin{equation*}
P^{(t)}(m) = \int_{X^{T-1}} \!\!\!\! m(x_1,\dots,x_\cT) dx_0 \dots dx_{t-1} dx_{t+1} \dots dx_T.
\end{equation*}
Analogously to standard OT problems \cite{cuturi13sinkhorn, benamou15iterative}, an approximate solution to \eqref{eq:tracking_multi} can be found by adding an entropic regularization term to the discretized problem \cite{elvander2020multi}.
Note that although discretizing the multi-marginal optimization problem \eqref{eq:tracking_multi} results in a much larger optimization problem than discretizing \eqref{eq:tracking_multi}, it turns out that utilizing the structure in the cost \eqref{eq:cost_seq} reduces the computational complexity drastically to the same order of operations.
Moreover, this approach results in sharper estimates of the distributions, which is a desirable properties in many scenarios, including DoA estimation \cite{elvander2020multi, haasler21multimarginal}.

\section{Problem formulation}

In this work we consider the setting where a small number of targets are emitting broad-band signals that we measure at several different frequencies $\omega_1,\dots,\omega_F$. Thus, the spatial power spectra $\Phi_f(\theta)$, for $f \in \indset{F}$, to be estimated are expected to have supports concentrated on a small set of angles. Furthermore, as the targets are broad-band, these supports are similar across frequency.
Herein, we propose to model this by requiring that the spatial sparsity measure
\begin{equation} \label{eq:gs_reg}
\int_{\angspace} \sup_{f \in \indset{F}}  \left| \Phi_f(\theta) \right|  d\theta
\end{equation}
is small.
In order to find power spectra that are spatially sparse and change smoothly over time, we propose to combine the tracking formulation \eqref{eq:tracking_multi} with the group-sparsity regularizer \eqref{eq:gs_reg}.
More precisely, we seek a transport plan $m_f\in \mathcal{M}_+(\cX)$ for each frequency $f \in \indset{F}$. Note that its projections describe the power spectra at the discrete time instances, $P^{(t)}(m_f) = \Phi_{f}^{(t)}$ for $f \in \indset{F}$ and $t \in \indset{\cT}$.
Moreover, let $\fwdop{f}$ and $\bR_{f}^{(t)}$ denote the measurement operator and covariance measurements at frequency $f$ and time $t$.
Then, we formulate the tracking problem for group-sparse spectra as
\begin{equation} \label{eq:probform}
\begin{aligned}
	\minwrt[m_f \in \cM_+(\cX)] \  &  \sum_{f=1}^F \sum_{t=1}^\cT \| \fwdop{f} (P^{(t)}(m_f)) - \bR_{f}^{(t)} \|_2^2 \\
	& + \alpha 
	\sum_{f=1}^F \int_\cX m_f(x) c(x) dx \\
	& + \beta 
	\sum_{t=1}^\cT \int_{\angspace} \sup_{f \in \indset{F}} \left\{ P^{(t)}(m_f) \right\} d\theta,
\end{aligned}
\end{equation}
where the cost is structured as in \eqref{eq:cost_seq}, and $\alpha, \beta > 0$ 
are parameters that regulate the emphasis on smoothness over time and group-sparsity, respectively.
\subsection{Discretization and entropic regularization}
Following previous works \cite{benamou15iterative, elvander2020multi, haasler21multimarginal}, we solve \eqref{eq:probform} by discretizing it and regularizing it with an entropic term.
We discretize the angle space $\angspace$ into $N$ grid points $\theta_1,\dots,\theta_N$.
The cost and transport plans are then described by $\cT$-mode tensors $\bC \in \RR^{N\times \ldots \times N}$ and $\bM_f \in \RR_+^{N\times\ldots \times N}$, where the elements are defined as $\bC_{i_1,\dots,i_\cT} = c(\theta_{i_1},\dots, \theta_{i_\cT})$ and similarly for $\bM_f$.
The discrete projection operator is defined as
\begin{equation*}
	[ P^{(t)}(\bM) ]_{i_t} = \sum_{i_1,\dots,i_{t-1},i_{t+1},\dots,i_{\cT}} \bM_{i_1,\dots,i_{t-1},i_t, i_{t+1},\dots,i_{\cT}},
\end{equation*}
and $P^{(t)}(\bM) \in \RR^N$ is the discretization of the spectrum $\Phi_{f}^{(t)}$.
Thus, the discrete version of the group-sparsity term \eqref{eq:gs_reg} reads
\begin{equation*}
	\norm{\begin{bmatrix}P^{(t)}(\bM_1),\!\ldots\!,P^{(t)}(\bM_F)\end{bmatrix}}_{\infty,1} \!=\! \sum_{i=1}^N \!\sup_{f \in \indset{F}}\! \big| [P^{(t)}\!(\bM_f)]_i \big|.
\end{equation*}
Moreover, let 
\begin{equation*}
\br_f^{(t)} =\begin{bmatrix}  \mathrm{vec}\left(\mathrm{Re}(\bR_f^{(t)})\right)^T & \mathrm{vec}\left(\mathrm{Im}(\bR_f^{(t)})\right)^T \end{bmatrix}^T
\end{equation*}
and let $\bG_f$ be the discrete counterpart of $\fwdop{f}$.
This lets us formulate the discretized and regularized group-sparse OT problem
\begin{equation} \label{eq:ot_discreg}
\begin{aligned}
	\minwrt[\bM_f \in \RR^{N \times \ldots \times N }]&\quad \sum_{f=1}^F \langle \bC, \bM_f\rangle + \epsilon D(\bM_f) 
	\\&+ \sum_{t=1}^T \sum_{f= 1}^F \frac{\gamma}{2} \norm{\br_f^{(t)} - \bG_f P_t(\bM_f)}_2^2
	\\&+ \eta \sum_{t=1}^{T} \norm{\begin{bmatrix}P^{(t)}(\bM_1),\ldots,P^{(t)}(\bM_F)\end{bmatrix}}_{\infty,1},
\end{aligned}
\end{equation}
where we for convenience of the following exposition set $\gamma = 2/\alpha$, $\eta=\beta/\alpha$, and
\begin{equation*}
 D(\bM) = \!\! \sum_{i_1,\ldots,i_\cT=1}^{N} \!\!\!\! \big( \bM_{i_1,\ldots,i_\cT} \log(\bM_{i_1,\ldots,i_\cT}) - \bM_{i_1,\ldots,i_\cT} + 1 \big)
\end{equation*}
is an entropic regularization term, and $\epsilon>0$ is a small regularization parameter.

\section{Method}

We solve the discretized and regularized problem \eqref{eq:ot_discreg} by a dual block coordinate descent, following the approach in \cite{benamou15iterative, elvander2020multi, haasler21multimarginal}.

\subsection{Dual problem}

\begin{theorem} \label{thm:dual}
The unique optimal transport plans $\bM_f$, $f \in \indset{F}$, are represented as
\begin{equation} \label{eq:M}
	\bM_f = \bU_f \odot \bK \odot \bV_f
\end{equation}
where
\begin{align*}
	\bU_f &= \bu_f^{(1)} \otimes \ldots \otimes \bu_f^{(T)} \text{ with } \bu_f^{(t)} = \exp\left( \frac{1}{\epsilon} \bG_f^T\blambda_f^{(t)}\right)
	\\ \bV_f  &= \bv_f^{(1)} \otimes \ldots \otimes \bv_f^{(T)} \text{ with } \bv_f^{(t)} = \exp\left( \frac{1}{\epsilon} \bpsi_f^{(t)}\right) 
	\\ \bK &= \exp\left(-\frac{1}{\epsilon}\bC\right)
\end{align*}
and where $\blambda_f^{(t)}$ and $\bpsi_f^{(t)}$ solves the dual problem
\begin{equation} \label{eq:dual}
\begin{aligned}
	\minwrt[\blambda_f^{(t)}, \bpsi_f^{(t)}] &\quad \sum_{f= 1}^F\epsilon \left\langle \bU_f , \bV_f \odot  \bK \right\rangle
	\\&\quad + \sum_{t=1}^{\cT} \sum_{f=1}^F \left( \frac{1}{2\gamma}\norm{\blambda_f^{(t)}}_2^2 -\langle\blambda_f^{(t)}, \br_f^{(t)}\rangle \right)
	\\ \text{subject to } &  \norm{ \begin{bmatrix}\bpsi_1^{(t)},\ldots,\bpsi_F^{(t)}\end{bmatrix}}_{1,\infty} \leq \eta \;,\; t \in \indset{\cT},
\end{aligned}
\end{equation}
where $\norm{\cdot}_{1,\infty}$ is the dual norm of $\norm{\cdot}_{\infty,1}$.
\end{theorem}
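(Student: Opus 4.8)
The plan is to derive the dual of the entropy-regularized problem \eqref{eq:ot_discreg} via Lagrangian duality, treating the nonsmooth group-sparsity term through its convex conjugate. First I would recognize that the primal objective is a sum of a linear transport cost, the separable entropy $\epsilon D(\bM_f)$, a smooth quadratic data-fidelity term in the projections $P^{(t)}(\bM_f)$, and the nonsmooth $\norm{\cdot}_{\infty,1}$ penalty coupling the frequencies at each time. To make the coupling explicit, I would introduce auxiliary variables $\bz_f^{(t)} = P^{(t)}(\bM_f) \in \RR^N$ together with the linear constraints $\bz_f^{(t)} = P^{(t)}(\bM_f)$, and attach dual multipliers $\blambda_f^{(t)}$ to the data-fidelity coupling and $\bpsi_f^{(t)}$ to the group-sparsity coupling. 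The quadratic data term $\frac{\gamma}{2}\norm{\br_f^{(t)} - \bG_f \bz_f^{(t)}}_2^2$ has a well-known Fenchel conjugate producing the term $\frac{1}{2\gamma}\norm{\blambda_f^{(t)}}_2^2 - \langle \blambda_f^{(t)}, \br_f^{(t)}\rangle$ after minimizing over $\bz_f^{(t)}$, with the $\bG_f^T\blambda_f^{(t)}$ factor entering the primal recovery formula.

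Next I would carry out the minimization over $\bM_f$ of the Lagrangian. Collecting all terms linear in $\bM_f$, one obtains an expression of the form $\langle \bC, \bM_f\rangle + \epsilon D(\bM_f) - \sum_t \langle \bG_f^T\blambda_f^{(t)} + \bpsi_f^{(t)}, P^{(t)}(\bM_f)\rangle$, where the inner products against the projections can be rewritten, using the definition of $P^{(t)}$, as full-tensor inner products against rank-one tensors whose $t$-th mode carries $\bG_f^T\blambda_f^{(t)} + \bpsi_f^{(t)}$ and all other modes are all-ones vectors. The entropy term makes this minimization separable over the tensor entries, and setting the gradient to zero yields the closed-form minimizer stated in \eqref{eq:M}, namely $\bM_f = \bU_f \odot \bK \odot \bV_f$ with the prescribed rank-one Gibbs factors $\bu_f^{(t)} = \exp(\frac{1}{\epsilon}\bG_f^T\blambda_f^{(t)})$ and $\bv_f^{(t)} = \exp(\frac{1}{\epsilon}\bpsi_f^{(t)})$. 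Substituting this minimizer back collapses the entropy and linear terms into the single coupling term $\epsilon\langle \bU_f, \bV_f \odot \bK\rangle$ appearing in \eqref{eq:dual}.

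The key step requiring care is the nonsmooth $\norm{\cdot}_{\infty,1}$ penalty. Here I would invoke the biconjugate/support-function representation: for a norm $\norm{\cdot}$, one has $\norm{\bw} = \sup_{\norm{\bxi}_* \leq 1} \langle \bxi, \bw\rangle$, where $\norm{\cdot}_*$ is the dual norm. Applying this at scale $\eta$ to the term $\eta\norm{[P^{(t)}(\bM_1),\ldots,P^{(t)}(\bM_F)]}_{\infty,1}$ introduces the dual variables $\bpsi_f^{(t)}$ jointly over $f$, with the constraint $\norm{[\bpsi_1^{(t)},\ldots,\bpsi_F^{(t)}]}_{1,\infty} \leq \eta$ — exactly the constraint in \eqref{eq:dual}, with $\norm{\cdot}_{1,\infty}$ being the dual of $\norm{\cdot}_{\infty,1}$. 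I expect this conjugation of the mixed group norm, and correctly identifying its dual as the $\norm{\cdot}_{1,\infty}$ norm, to be the main obstacle, since one must verify that the $\sup$-over-$f$ (an $\ell^\infty$ across frequencies) combined with the $\ell^1$-sum over grid points dualizes to an $\ell^1$-across-frequencies combined with $\ell^\infty$-over-grid structure.

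Finally I would invoke strong duality to close the argument. Since the primal is strictly convex (the entropy term $\epsilon D$ is strictly convex on the positive orthant) and the feasible set has nonempty relative interior, Slater's condition holds and there is no duality gap; strict convexity guarantees the primal minimizer is \emph{unique}, justifying the uniqueness claim and the validity of recovering $\bM_f$ through \eqref{eq:M} from any dual optimizer $(\blambda_f^{(t)}, \bpsi_f^{(t)})$.
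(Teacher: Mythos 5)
Your proposal follows essentially the same route as the paper's proof: variable splitting on the projections $P^{(t)}(\bM_f)$, Lagrangian/Fenchel duality in which the entropy term yields the Gibbs-type representation \eqref{eq:M}, the quadratic data term yields $\frac{1}{2\gamma}\norm{\blambda_f^{(t)}}_2^2 - \langle\blambda_f^{(t)},\br_f^{(t)}\rangle$, and the mixed norm dualizes to the $\norm{\cdot}_{1,\infty}$-ball constraint (the paper derives this last fact as a boundedness condition on the Lagrange dual, which is the same statement as your support-function representation, up to a sign convention on $\bpsi_f^{(t)}$ that is immaterial by symmetry of the dual ball). Your additional remarks on Slater's condition and uniqueness via strict convexity of the entropy are correct and only make explicit what the paper leaves implicit.
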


\begin{proof}
See appendix.
\end{proof}
We note that the first term in the objective of the dual \eqref{eq:dual} can be expressed in terms of the components in \eqref{eq:M}.
\begin{lemma} \label{lem:xi}
With the decomposition \eqref{eq:M} and cost function according to \eqref{eq:cost_seq}, we can write
\begin{equation*}
\langle \bU_f, \bV_f \odot \bK \rangle = \| \bu_f^{(t)} \odot \bv_f^{(t)} \odot \bxi_f^{(t)} \|_1,
\end{equation*}
where $\bxi_f^{(t)} = \bw_f^{(t)} \odot \hat{\bw}_f^{(t)}$, and
\begin{equation} \label{eq:what}
\hat{\bw}_f^{(t)} \leftarrow  \begin{cases} \ett & ,t = 1 \\ {\bK}^{T}  \left(\bu_f^{(t-1)} \odot \bv_f^{(t-1)} \odot \hat{\bw}_f^{(t-1)}\right) &, t> 1 \end{cases},
\end{equation}
\begin{equation} \label{eq:w}
\bw_f^{(t)} \leftarrow \begin{cases}{\bK}^{}\left(\bu_f^{(t+1)} \odot \bv_f^{(t+1)} \odot \bw_f^{(t+1)}\right) & , t < \cT \\ \ett &, t = \cT\end{cases}. \!\!
\end{equation}
\end{lemma}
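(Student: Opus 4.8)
The plan is to recognize the left-hand side as a sum--product contraction over a chain-structured graph, and to identify $\hat{\bw}_f^{(t)}$ and $\bw_f^{(t)}$ with the forward and backward messages of that contraction. First I would expand the inner product using the rank-one structure of $\bU_f$ and $\bV_f$ together with the factorization of the kernel tensor induced by the pairwise cost \eqref{eq:cost_seq}. Writing $\bK_{i,j} = \exp(-c(\theta_i,\theta_j)/\epsilon)$ for the $N\times N$ pairwise kernel matrix, the decoupling \eqref{eq:cost_seq} gives $\bK_{i_1,\dots,i_\cT} = \prod_{\tau=1}^{\cT-1}\bK_{i_\tau,i_{\tau+1}}$, so that with $\mathbf{p}_f^{(s)} \triangleq \bu_f^{(s)}\odot\bv_f^{(s)}$,
\begin{equation*}
\langle \bU_f, \bV_f\odot\bK\rangle = \sum_{i_1,\dots,i_\cT} \left(\prod_{s=1}^{\cT}(\mathbf{p}_f^{(s)})_{i_s}\right)\prod_{\tau=1}^{\cT-1}\bK_{i_\tau,i_{\tau+1}}.
\end{equation*}

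Next I would establish, by induction on $t$, that the forward recursion \eqref{eq:what} computes the partial contraction over the past indices,
\begin{equation*}
(\hat{\bw}_f^{(t)})_{i_t} = \sum_{i_1,\dots,i_{t-1}}\left(\prod_{s=1}^{t-1}(\mathbf{p}_f^{(s)})_{i_s}\right)\prod_{\tau=1}^{t-1}\bK_{i_\tau,i_{\tau+1}},
\end{equation*}
with the base case $\hat{\bw}_f^{(1)} = \ett$ corresponding to the empty product. The induction step amounts to observing that applying $\bK^T$ to $\mathbf{p}_f^{(t-1)}\odot\hat{\bw}_f^{(t-1)}$ contracts the index $i_{t-1}$ against $i_t$ through $\bK_{i_{t-1},i_t}$, appending exactly one node factor and one pairwise factor. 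An identical argument applied to \eqref{eq:w}, now propagating from $t=\cT$ downward and using $\bK$ rather than $\bK^T$, shows that $\bw_f^{(t)}$ collects the symmetric contraction over the future indices $i_{t+1},\dots,i_\cT$, carrying the kernels $\bK_{i_\tau,i_{\tau+1}}$ for $\tau\ge t$.

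Finally I would combine the two messages at node $t$. Substituting the two closed forms into $\bxi_f^{(t)} = \bw_f^{(t)}\odot\hat{\bw}_f^{(t)}$ and summing the entries of $\mathbf{p}_f^{(t)}\odot\bxi_f^{(t)}$ over $i_t$ reassembles the full contraction displayed above: the forward factor supplies the kernels $\bK_{i_\tau,i_{\tau+1}}$ for $\tau < t$, node $t$ supplies $(\mathbf{p}_f^{(t)})_{i_t}$, and the backward factor supplies the kernels for $\tau\ge t$, so every pairwise kernel appears exactly once and every index $i_\tau$ with $\tau\neq t$ is summed. Since $\bu_f^{(t)}$, $\bv_f^{(t)}$, and $\bK$ are entrywise exponentials and hence strictly positive, this sum of nonnegative entries equals the norm $\norm{\mathbf{p}_f^{(t)}\odot\bxi_f^{(t)}}$ in the $\ell_1$ sense claimed. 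The one point demanding care is the bookkeeping at the cut: one must check that the split at node $t$ assigns $\bK_{i_{t-1},i_t}$ to the forward message (whence the transpose in \eqref{eq:what}) and $\bK_{i_t,i_{t+1}}$ to the backward message, so that no pairwise factor is double-counted or omitted. This same clean split is what makes the resulting expression independent of the particular $t$ chosen, as the lemma implicitly requires.
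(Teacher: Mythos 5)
Your proof is correct and is essentially the argument the paper relies on: the paper itself gives no details, deferring to \cite[Proof of Proposition~2]{elvander2020multi}, and that argument is precisely the chain-structured sum--product contraction you spell out, with the forward/backward messages $\hat{\bw}_f^{(t)}$, $\bw_f^{(t)}$ computed by induction. Your bookkeeping is right where it matters: the transpose in \eqref{eq:what} matches the kernel ordering $\bK_{i_{t-1},i_t}$, the split assigns kernels $\tau<t$ to the forward message and $\tau\ge t$ to the backward one so nothing is double-counted, and positivity of the entrywise exponentials justifies writing the resulting sum as an $\ell_1$ norm.
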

\begin{proof}
The results follows similarly to \cite[Proof of Proposition~2]{elvander2020multi}.
\end{proof}

\subsection{Algorithm}
We propose to solve \eqref{eq:ot_discreg} by the means of a block coordinate descent in its dual problem \eqref{eq:dual}.
That is, we iteratively optimize \eqref{eq:dual} with respect to one set of variables, while keeping the other variables fixed.
More precisely, we iterate the following steps
\begin{enumerate}
\item For $t\in \indset{\cT}$ and $f \in \indset{F}$ let
\begin{equation} \label{eq:update_lambda}
\hspace{-20pt} \blambda_f^{(t)} \leftarrow \argminwrt[ \blambda] \  \epsilon \langle e^{\bG_f^{(t)}\blambda/\epsilon}, \bv_f^{(t)} \odot \bxi_f^{(t)}\rangle + \frac{1}{2\gamma}\norm{\blambda}_2^2 -\langle\blambda, \br_f^{(t)}\rangle
\end{equation}
\item For $t \in \indset{\cT}$ let 
\begin{equation} \label{eq:update_psi}
\begin{aligned}
\hspace{-20pt} \{\bpsi_1^{(t)},\dots,\bpsi_F^{(t)} \} \leftarrow	\minwrt[ \bpsi_1,\dots,\bpsi_F] &\quad \sum_{f= 1}^F \epsilon \langle e^{\bpsi_f/\epsilon}, \bu_f^{(t)} \odot  \bxi_f^{(t)}\rangle
	\\ \text{subject to } &  \norm{ \begin{bmatrix}\bpsi_1,\ldots,\bpsi_F\end{bmatrix}}_{1,\infty} \leq \eta .
\end{aligned}
\end{equation}
\end{enumerate}
Note that before each step the vector $\bxi_f^{(t)}$ must be computed as described in Lemma~\ref{lem:xi}.
Since iteratively updating the dual variables according to \eqref{eq:update_lambda} and \eqref{eq:update_psi} is a block coordinate descent, and the dual problem \eqref{eq:dual} satisfies the assumptions of \cite[Theorem~2.1]{luo1992convergence}\footnote{The theorem requires standard assumptions on the optimization problem, e.g., strict convexity.}, the iterates converge linearly to the optimal solution of \eqref{eq:dual}.
In the limit point, the optimal solution to the primal \eqref{eq:ot_discreg} can be constructed as described in \eqref{eq:M}.
It turns out that the optimization problems \eqref{eq:update_lambda} and \eqref{eq:update_psi} can be solved efficiently.
First, we note that the minimizer $\blambda$ of \eqref{eq:update_lambda} solves 
\begin{equation} \label{eq:lambda_cond}
	\bG_f \exp\left(\frac{1}{\epsilon} \bG_f^T\blambda \right) \odot \bv_f^{(t)} \odot  \bxi_f^{(t)}+ \frac{1}{\gamma}\blambda - \br_f^{(t)} = 0,
\end{equation}
which we solve by a Newton's method, as proposed in \cite{elvander2020multi}.
Secondly, \eqref{eq:update_psi} can be solved as described in the following.
\begin{theorem} \label{thm:psi}
The solution $\{\bpsi_1^{(t)},\dots,\bpsi_F^{(t)}\}$ of \eqref{eq:update_psi} can be constructed for each index $i=1,\dots,N$ separately by performing the following steps.
\begin{enumerate}
	\item Let $\bz = - \begin{bmatrix} (\bu_1^{(t)})_i (\bxi_1^{(t)})_i,\dots, (\bu_F^{(t)})_i  (\bxi_F^{(t)})_i \end{bmatrix}^T \in \RR^F$.
	\item Sort the vector $\bz$ in ascending order.
	\item Identify $f^*$ such that $g(z_{f^*}) > 0$ and $g(z_{f^*+1}) \leq 0$, where
		\begin{equation}\label{eq:g_func}
		g(z) \triangleq -(F-k)\log z - \frac{\eta}{\epsilon} + \sum_{\ell = k+1}^F \log z_\ell .
		\end{equation}
	\item For $f \in \indset{F}$, let 
	\begin{equation*}
	\hspace{-25pt} ( \bpsi_f^{(t)} )_i   = - \epsilon \ \text{max}\bigg( 0, \  \log z_f + \frac{1}{F\!-\!f^*}  \Big( \frac{\eta}{\epsilon} - \sum_{\ell = f^*+1}^F \!\! \log z_\ell \Big) \! \bigg).
	\end{equation*}
\end{enumerate}
\end{theorem}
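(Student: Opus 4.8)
The plan is to exploit the separable structure of \eqref{eq:update_psi} to reduce it to $N$ scalar water-filling problems. First I would write the objective explicitly as $\epsilon\sum_{i=1}^N\sum_{f=1}^F e^{(\bpsi_f)_i/\epsilon}(\bu_f^{(t)})_i(\bxi_f^{(t)})_i$, which is a sum of terms each depending on a single grid index $i$. The key observation is that the constraint decouples in the same way: since $\norm{\cdot}_{1,\infty}$ is dual to $\norm{\cdot}_{\infty,1}=\sum_i\sup_f\abs{\cdot}$, we have $\norm{[\bpsi_1,\dots,\bpsi_F]}_{1,\infty}=\max_i\sum_f\abs{(\bpsi_f)_i}$, so the single bound $\le\eta$ is equivalent to the $N$ per-index constraints $\sum_f\abs{(\bpsi_f)_i}\le\eta$. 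Hence \eqref{eq:update_psi} splits into $N$ independent problems, one per index $i$, justifying the ``for each $i$ separately'' claim.

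For a fixed $i$, set $a_f\triangleq(\bu_f^{(t)})_i(\bxi_f^{(t)})_i>0$ and $x_f\triangleq(\bpsi_f)_i$; the subproblem is to minimize $\sum_f a_f e^{x_f/\epsilon}$ subject to $\sum_f\abs{x_f}\le\eta$. Because every $a_f>0$ and the exponential is increasing, any positive $x_f$ can be set to zero without violating feasibility while strictly decreasing the objective, so the minimizer satisfies $x_f\le0$; writing $x_f=-y_f$ with $y_f\ge0$ turns the problem into minimizing $\sum_f a_f e^{-y_f/\epsilon}$ over $\{y\ge0,\ \sum_f y_f\le\eta\}$, and strict monotonicity forces the budget to bind, $\sum_f y_f=\eta$. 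This objective is strictly convex, so the minimizer is unique, consistent with the uniqueness asserted in Theorem~\ref{thm:dual}.

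Next I would apply the KKT conditions. With a multiplier $\nu$ for $\sum_f y_f=\eta$ and multipliers for $y_f\ge0$, stationarity gives $a_f e^{-y_f/\epsilon}=\nu$ on the active set $\{f:y_f>0\}$ and $a_f\le\nu$ off it. Thus there is a water level $\nu$: an index is active exactly when $a_f>\nu$, in which case $y_f=\epsilon\log(a_f/\nu)$, and $\nu$ is pinned down by the budget equation $\sum_{f\in\mathrm{active}}\epsilon\log(a_f/\nu)=\eta$. Since the active set consists of the coefficients of largest magnitude, sorting the entries of $\bz$ makes it a contiguous block, and substituting the resulting closed form $\log\nu=\tfrac{1}{F-f^*}\big(\sum_{\ell=f^*+1}^F\log z_\ell-\eta/\epsilon\big)$ back into $y_f=\epsilon\log(a_f/\nu)$ yields exactly the stated expression, with the outer $\max(0,\cdot)$ encoding membership in the active set.

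The main obstacle is showing that the sorting-based rule correctly identifies the active set, i.e. that the cutoff $f^*$ located by the sign change of $g$ in \eqref{eq:g_func} really yields a KKT point. For this I would view $g$, evaluated along the sorted sequence, as a function of the rank $k$, and show that its successive differences have constant sign because the coefficients are monotonically ordered; a one-line computation reduces $g(z_{k+1})-g(z_k)$ to $(F-k)$ times the log-ratio of two adjacent sorted coefficients. This monotonicity guarantees a unique crossing $f^*$ with $g(z_{f^*})>0\ge g(z_{f^*+1})$, and these two inequalities are precisely the complementary-slackness conditions separating the last excluded coefficient from the first included one. Verifying that this sign test coincides with feasibility of the resulting primal--dual pair is the only delicate bookkeeping; the remaining steps are routine convex-duality manipulations.
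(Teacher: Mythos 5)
Your proposal is correct and takes essentially the same route as the paper's own proof: decouple over the grid indices, reduce to a per-index problem with a binding $\ell_1$ budget, characterize the solution via a Lagrangian/KKT water-filling condition $x_f = \max(\log z_f - \log \nu, 0)$, and pin down the multiplier $\nu$ from the sign change of $g$ over the sorted coefficients, then back-substitute. The only (minor) differences are that you justify the per-index decoupling explicitly through the dual-norm identity and establish monotonicity of $g$ discretely along the sorted ranks, whereas the paper asserts the decoupling and uses strict monotonicity of the continuous function $g(\nu)$; these are equivalent bookkeeping choices, not a different method.
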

\begin{proof}
See appendix.
\end{proof}

The full method is summarized in Algorithm~\ref{alg:bcd}.
\begin{algorithm} [tb]
\begin{algorithmic}
\WHILE{Not converged}
	\FOR {$t \in \{1,\ldots,\cT\}$}
		\IF{forward sweep}
		\STATE Update $\hat{\bw}_f^{(t)}$ for $f \in \indset{F}$ as in \eqref{eq:what}
		\ELSE
			\STATE Update $\bw_f^{(t)}$ for $f \in \indset{F}$ as  in \eqref{eq:w}
		\ENDIF
		\STATE $\bxi_f^{(t)} \leftarrow \bw_f^{(t)} \odot \hat{\bw}_f^{(t)}$ , $f \in \indset{F}$
		\STATE $\blambda_f^{(t)} \leftarrow $ solution to \eqref{eq:lambda_cond}, $f \in \indset{F}$
		\STATE $\bu_f^{(t)} \leftarrow \exp\left(\frac{1}{\epsilon} \bG_f^T\blambda_f^{(t)} \right)$, $f \in \indset{F}$
		\STATE $\bpsi_f^{(t)}, f \in \indset{F} \  \leftarrow$ as in Theorem~\ref{thm:psi},  
		\STATE $\bv_f^{(t)} \leftarrow \exp\left(\frac{1}{\epsilon} \bpsi_f^{(t)} \right)$, $f \in \indset{F}$
	\ENDFOR
\ENDWHILE
\end{algorithmic}
\caption{Group-sparse multi-marginal OT} \label{alg:bcd}
\end{algorithm}
The algorithm sweeps forward and backwards through the time index $t \in \indset{\cT}$. By storing previous results for the vectors $\hat{\bw}_f^{(t)}$ and $\bw_f^{(t)}$, in each iteration only one of these vectors has to be updated for all $f \in \indset{F}$. This requires $F$ matrix-vector multiplications as in \eqref{eq:what}- \eqref{eq:w}, where the matrix is of size $N\times N$, and is thus of complexity $\mathcal{O}(FN^2)$.
The update of $\lambda_f^{(t)}$ requires finding the root of \eqref{eq:lambda_cond} by Newton's method. We observe that after a few outer Sinkhorn iterations, the Newton method typically converges within one step, and thus requires solving only one system of $Q^2$ linear equations. 
Finally, for the updates of $\bpsi_f^{(t)}$ we need to perform the steps listed in Theorem~\ref{thm:psi} for the $N$ elements in $\bpsi_f^{(t)}$. The most computationally expensive operation here is the sorting in step 2) which requires $\mathcal{O}(F\log F)$ operations. One update of $\bpsi_f^{(t)}$ for a given $t$ and $f \in \indset{F}$ has thus complexity $\mathcal{O}(NF\log F)$. \looseness=-1

\begin{figure}[t]
\centering
%
\begin{subfigure}[t]{0.23\textwidth}
        \centering
            \includegraphics[width=\textwidth]{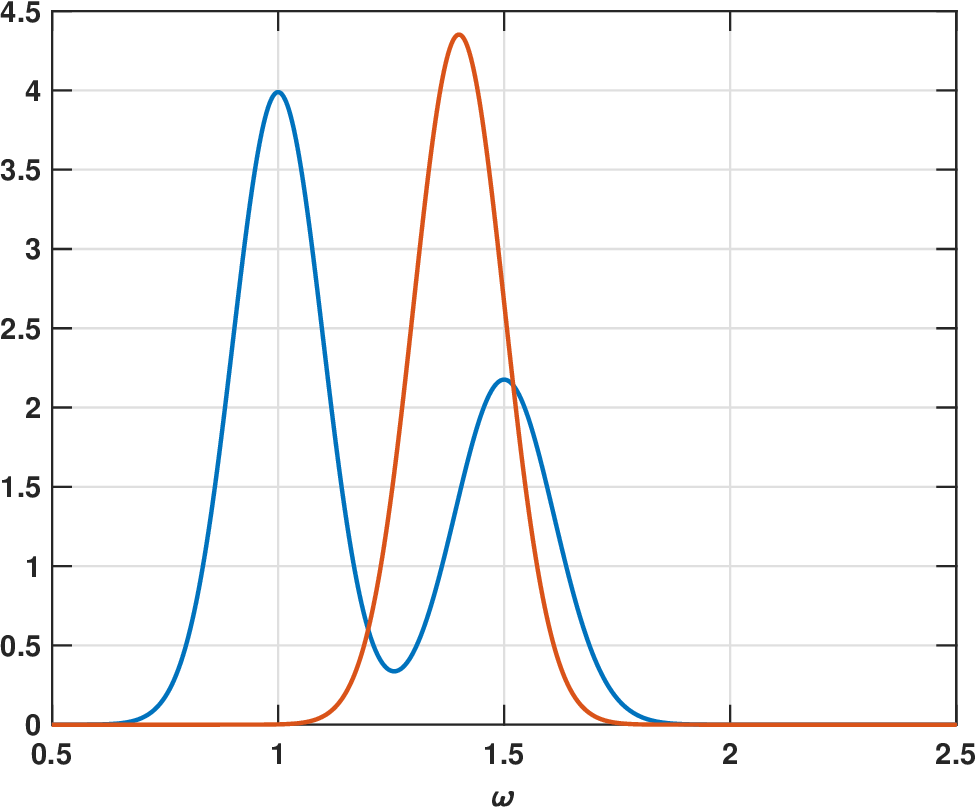}
            \caption{Temporal spectra.}
            \label{fig:temporal_spectrum}
\end{subfigure}
\begin{subfigure}[t]{0.23\textwidth}
        \centering
            \includegraphics[trim={50pt 30pt 50pt 0pt}, clip, width=\textwidth]{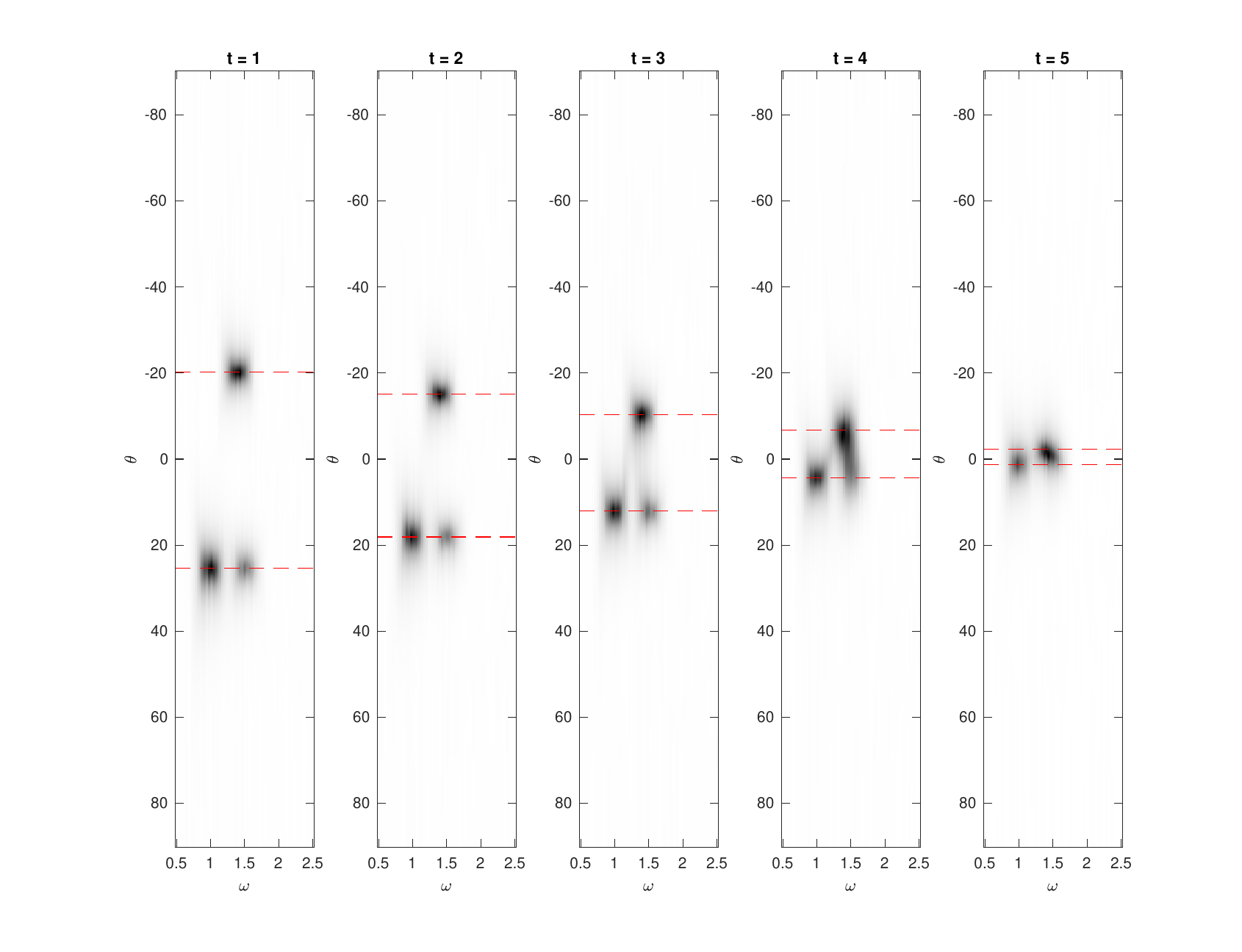}
           \caption{MVDR estimate.}
            \label{fig:capon_spectrum}
\end{subfigure}
%
\begin{subfigure}[t]{0.23\textwidth}
        \centering
            \includegraphics[trim={40pt 20pt 40pt 15pt}, clip, width=\textwidth]{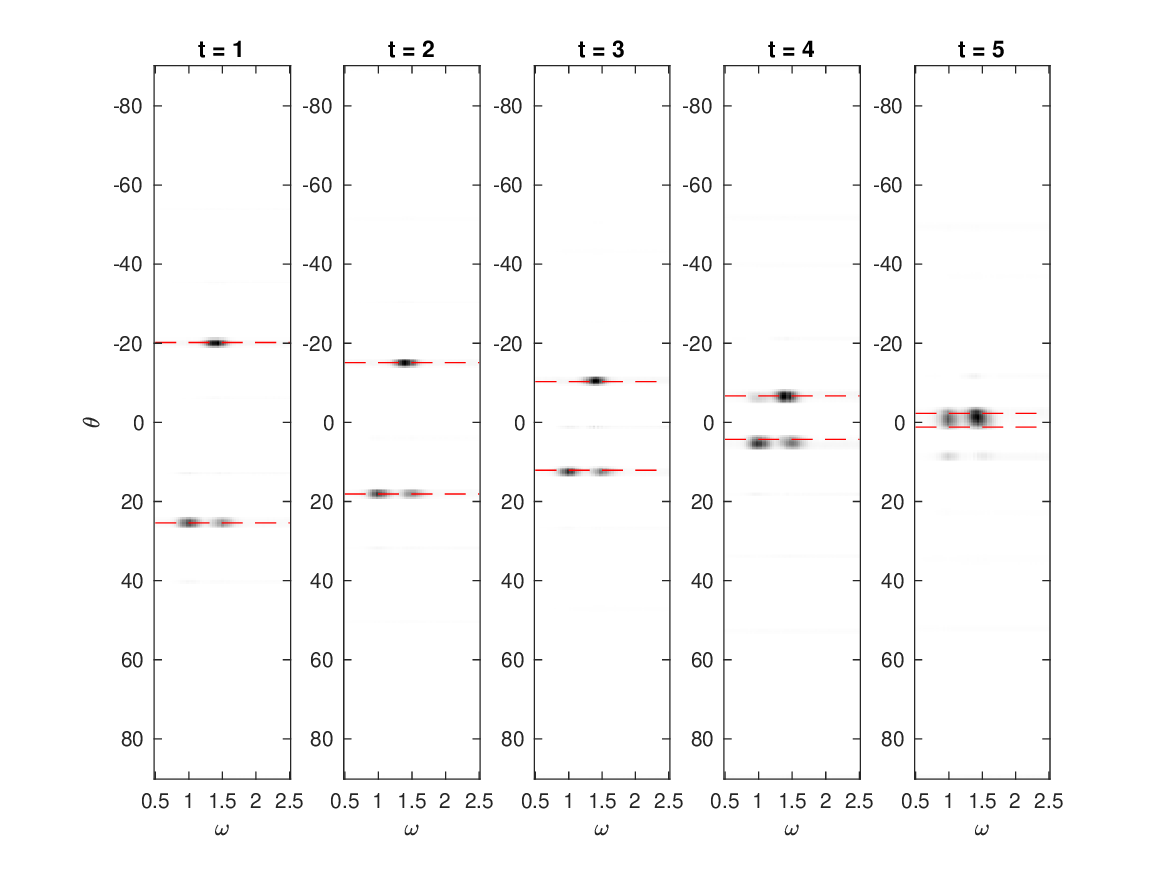}
           \caption{Group lasso estimate.}
            \label{fig:group_lasso_spectrum}
\end{subfigure}
%
\begin{subfigure}[t]{0.23\textwidth}
        \centering
            \includegraphics[trim={40pt 20pt 40pt 15pt}, clip, width=\textwidth]{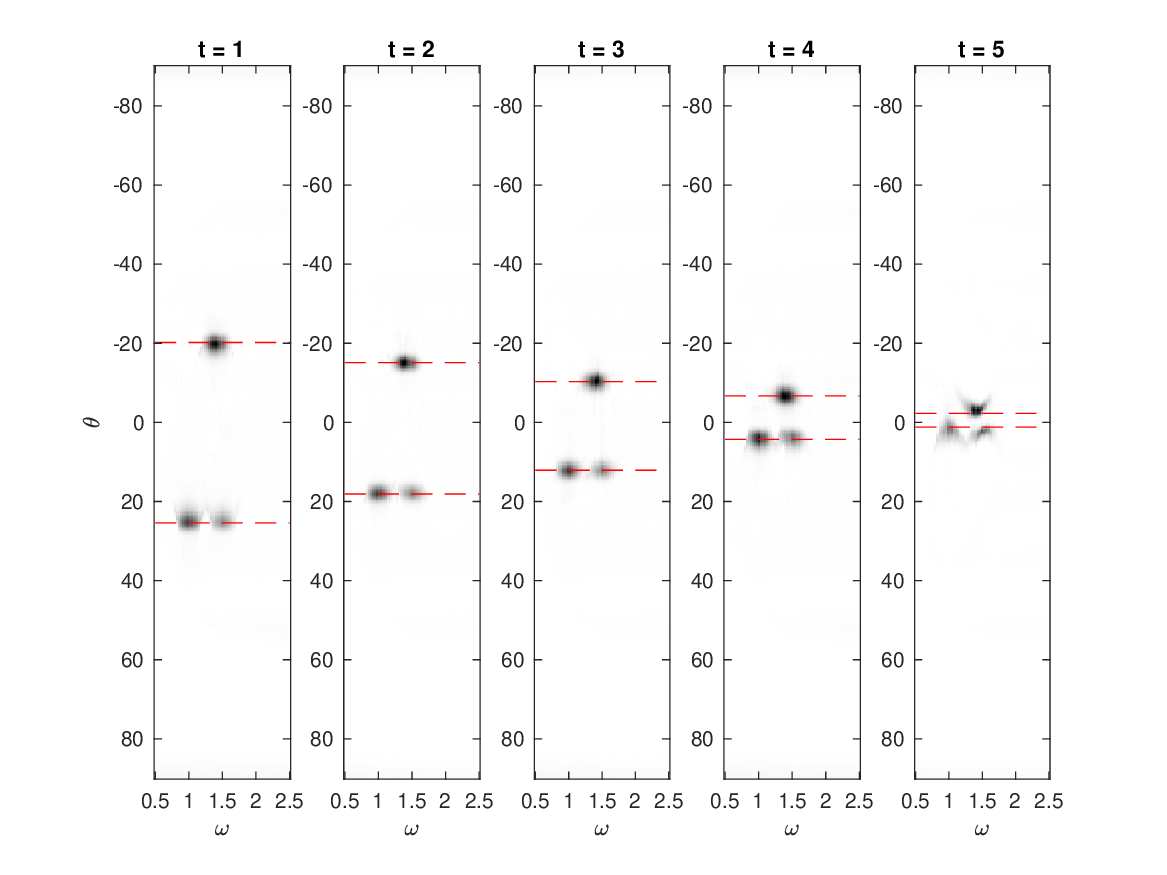}
           \caption{Proposed estimate.}
            \label{fig:ot_spectrum}
\end{subfigure}
\caption{Estimated spatio-temporal spectra in tracking scenario with two targets. }
\end{figure}

\section{Numerical experiments}
In this section, we illustrate the proposed method, and in particular the value of information sharing between frequencies and successive time-points as promoted by the group-sparsity promoting penalty of \eqref{eq:ot_discreg} and the OT distance, respectively. We do this in a simulated scenario as well as for real data measured on a hydrophone array. Throughout, we use a cost function according to \eqref{eq:cost_seq} with $c(\theta_{i_t},\theta_{i_t+1}) = (\theta_{i_t} - \theta_{i_t+1})^2$.
\subsection{Simulated two-target scenario}
Consider two broad-band point sources moving in angle space.
The (constant) ground truth temporal spectra are shown in Figure~\ref{fig:temporal_spectrum}. Using a uniform linear array consisting of $Q = 11$ sensors, we for $F = 63$ frequencies uniform on [0.5, 2.5] (in angular frequency) estimate the array covariance matrix at $\cT = 5$ time instances by the sample covariance matrix from 200 array snapshots. The array signals are contaminated by spatially and temporally white Gaussian noise as to yield a signal-to-noise ratio of 10 dB.
The estimated sequence of spatio-temporal spectra are shown in Figure~\ref{fig:ot_spectrum}, where ground-truth source locations are indicated by dashed lines. As can be seen, the proposed method is able to produce estimates indicating localized and well-separated sources. 
As reference, Figures~\ref{fig:capon_spectrum} and \ref{fig:group_lasso_spectrum} show estimates produced by the standard minimum-variance distortionsless response (MVDR/Capon) spatial spectral estimator \cite{Capon69}, and non-negative group lasso (with each group being the frequencies corresponding to a spatial angle), respectively. Note here that the MVDR estimate treats each time instance and frequency independently, whereas the group lasso fuses information across frequency by means of its sparsity penalty. It may be noted that neither of the reference methods manages to accurately separate the targets in angle and frequency at $t = 5$ due to the limited array aperture and the frequency overlap of the sources.
%
%
\begin{figure}[t]
\centering
\centering
            \includegraphics[width=.45\textwidth]{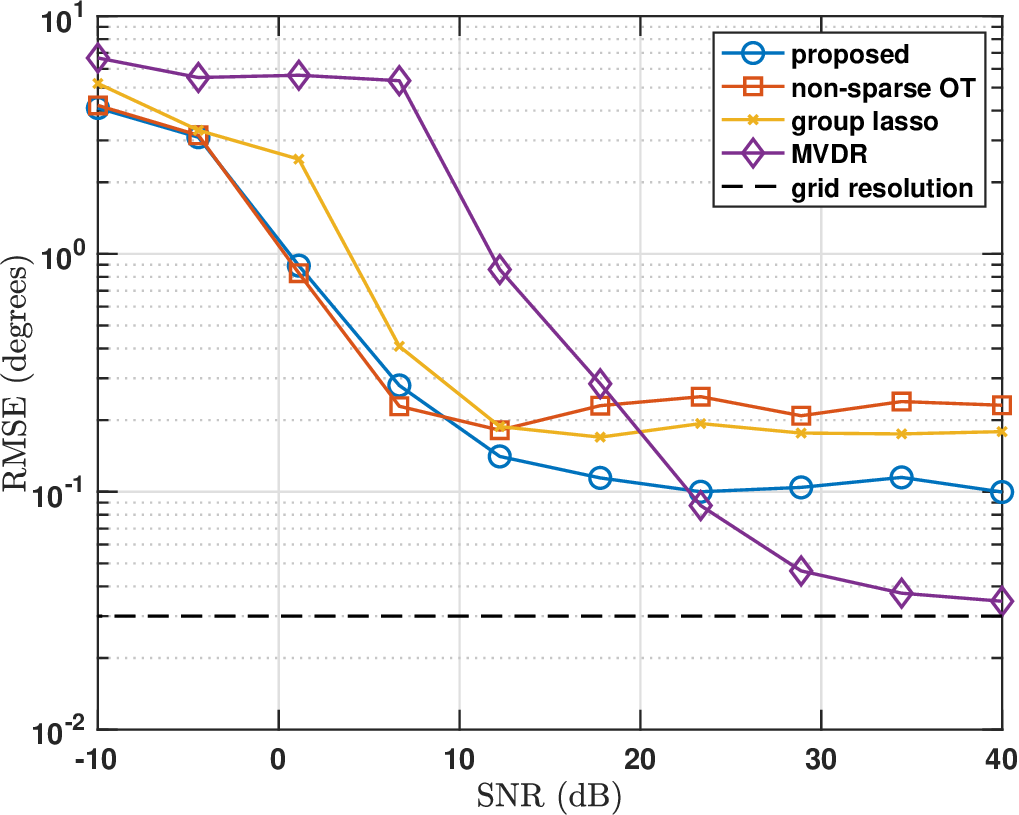}
           \caption{Target angle estimation error as a function of SNR.}
            \label{fig:simulation_study}
\end{figure}
%
\subsection{Localization accuracy}
For the same scenario, we study the accuracy of the target angle estimates as a function of the sensor noise. Figure~\ref{fig:simulation_study} shows the (root) means squared error (RMSE) for the angle estimates at time point $t = 4$, averaged over the two targets, for varying SNR. For each SNR, 50 Monte Carlo simulations are performed, where the target angles are perturbed randomly as to avoid biasing effects caused by the discrete grid. The RMSE is displayed for the proposed method, as well as for the MVDR estimator, and the method from \cite{elvander2020multi} that does not include any sparsity-promoting penalty. For all methods, the angle estimates are determined as the peaks of the spatial spectrum, i.e., the spatio-temporal spectrum averaged over frequency. As can be seen, the information sharing induced by the sparsity-promoting penalty leads to more accurate estimates as compared to only using the OT dynamics. It may also be noted that the OT-based methods incur a small bias, visible for the lowest noise levels, due to the tying together of consecutive time-points. For higher levels of noise, this is outweighed by the increased robustness.
\subsection{Hydrophone array measurements}
We here consider a real-world example with monitoring of a scene using an $Q=8$ element non-uniform linear hydrophone array with a total aperture of 2.08 meters. The data consists of a 35 seconds long recording, with a sampling frequency of 32 kHz. The signal source is a surface vessel moving in shallow water. We construct the band-pass signals by means of the STFT using a Hann window of length 0.2 seconds with 50\% overlap. The array covariance matrix is estimated in each frame using exponential averaging, resulting in a sequence of $\mathcal{T} = 73$ observation time points. We apply the proposed method using $F = 61$ frequencies in the interval 2 kHz -- 8 kHz. The array response vectors $\ba_f$ are constructed under the assumption of free-field propagation and targets in the far-field, with an assumed speed of sound in water of 1480 m/s. The resulting estimates are shown in Figure~\ref{fig:hydrophone}. Here, the estimated spatial spectrum over time (averaged over the frequencies) is shown in Figure~\ref{fig:hydrophone_ot_spatial}, whereas the estimated (assumed stationary) temporal spectrum is shown in Figure~\ref{fig:hydrophone_ot_temporal}. As can be seen, the spatial spectrum is well resolved, showing a single target trajectory. Overlayed in Figure~\ref{fig:hydrophone_ot_temporal} is the Burg temporal spectral estimate, computed from one of the hydrophone channels under assumption of stationarity. As can be seen, the proposed estimate coincides well with the Burg estimate. It may here be noted that no annotated ground truth for the data is available. The corresponding MVDR estimates are shown in Figures~\ref{fig:hydrophone_mvdr_temporal} and \ref{fig:hydrophone_mvdr_spatial}. As can be seen, the spatial spectrum is less well-resolved and contains more noise, and the corresponding temporal spectrum does not give an accurate representation of the signal's frequency content.
\begin{figure}[t]
\centering
%
\begin{subfigure}[t]{0.23\textwidth}
        \centering
            \includegraphics[width=\textwidth]{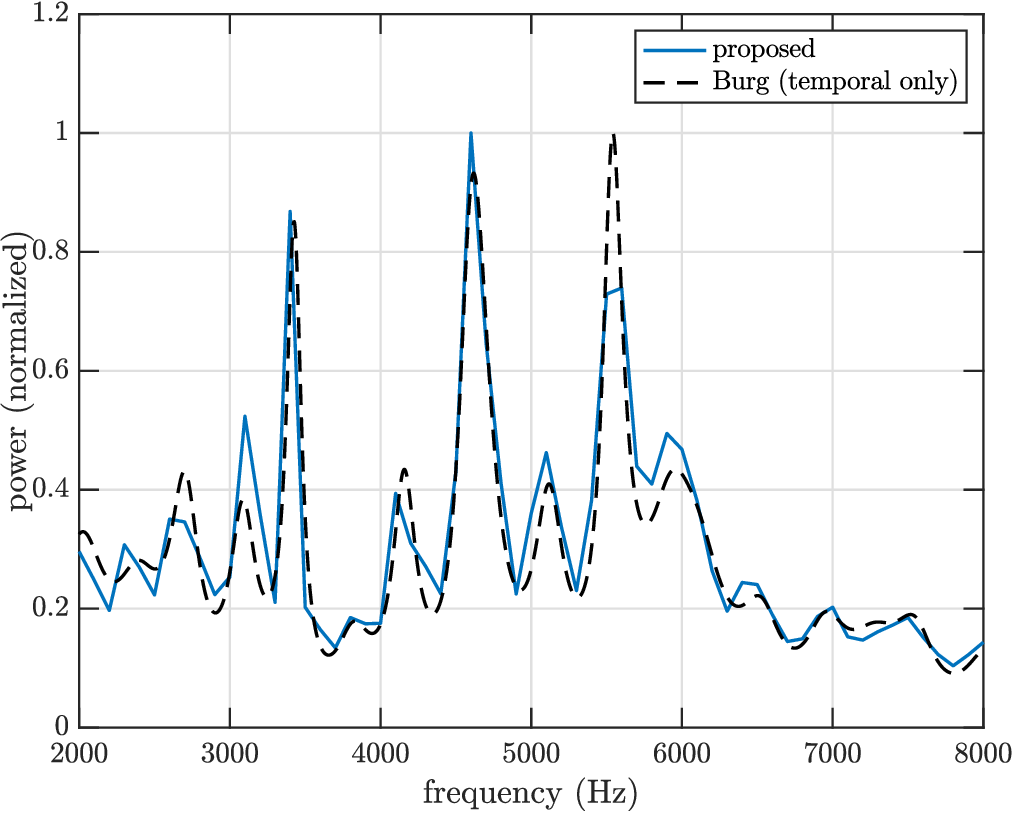}
            \caption{Proposed: temporal.}
            \label{fig:hydrophone_ot_temporal}
\end{subfigure}
\begin{subfigure}[t]{0.23\textwidth}
        \centering
            \includegraphics[width=\textwidth]{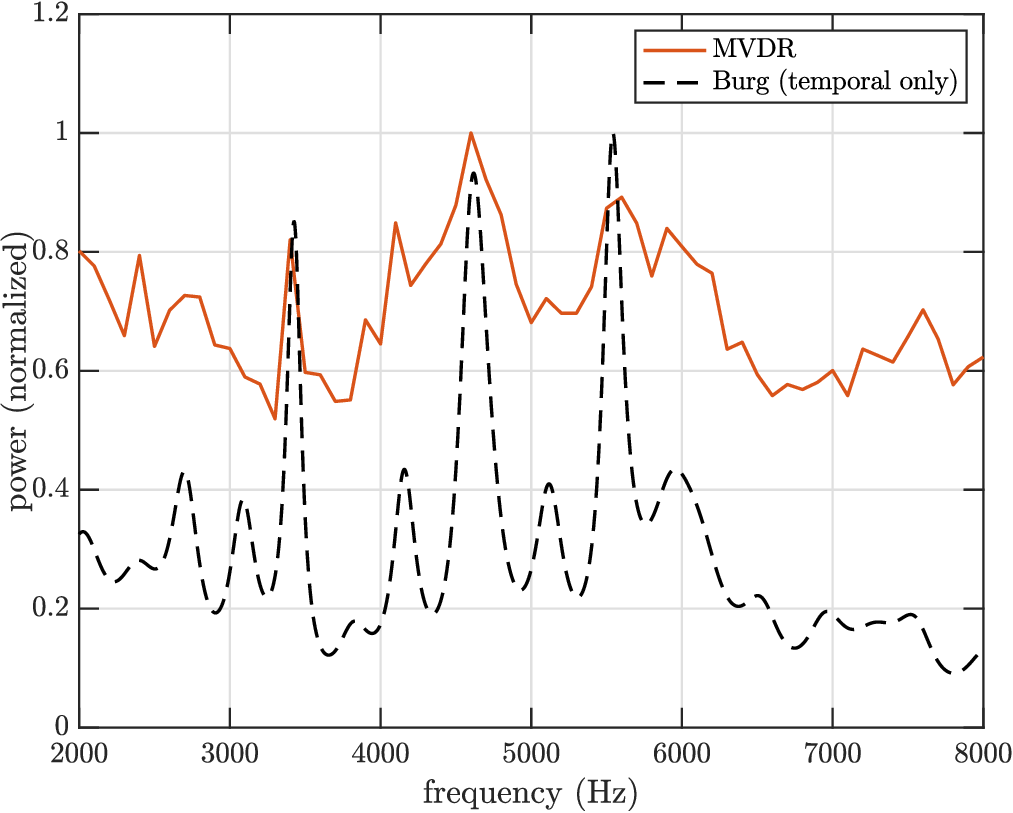}
           \caption{MVDR: temporal.}
            \label{fig:hydrophone_mvdr_temporal}
\end{subfigure}
%
\begin{subfigure}[t]{0.23\textwidth}
        \centering
            \includegraphics[width=\textwidth]{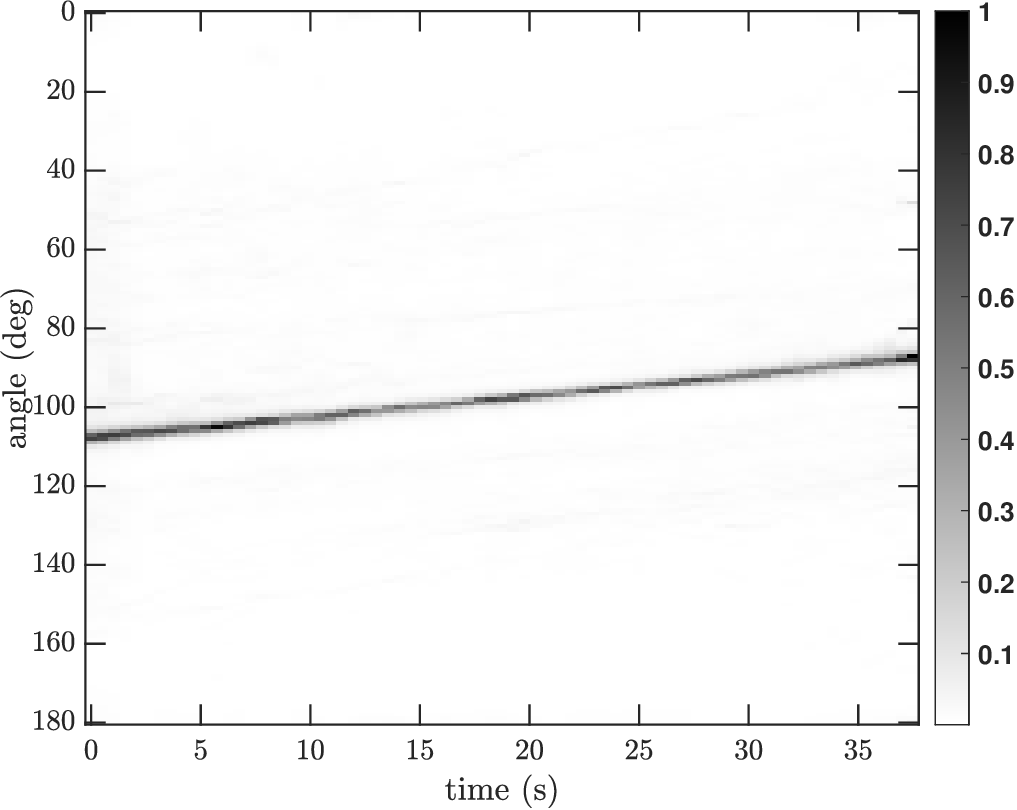}
           \caption{Proposed: spatial.}
            \label{fig:hydrophone_ot_spatial}
\end{subfigure}
%
\begin{subfigure}[t]{0.23\textwidth}
        \centering
            \includegraphics[width=\textwidth]{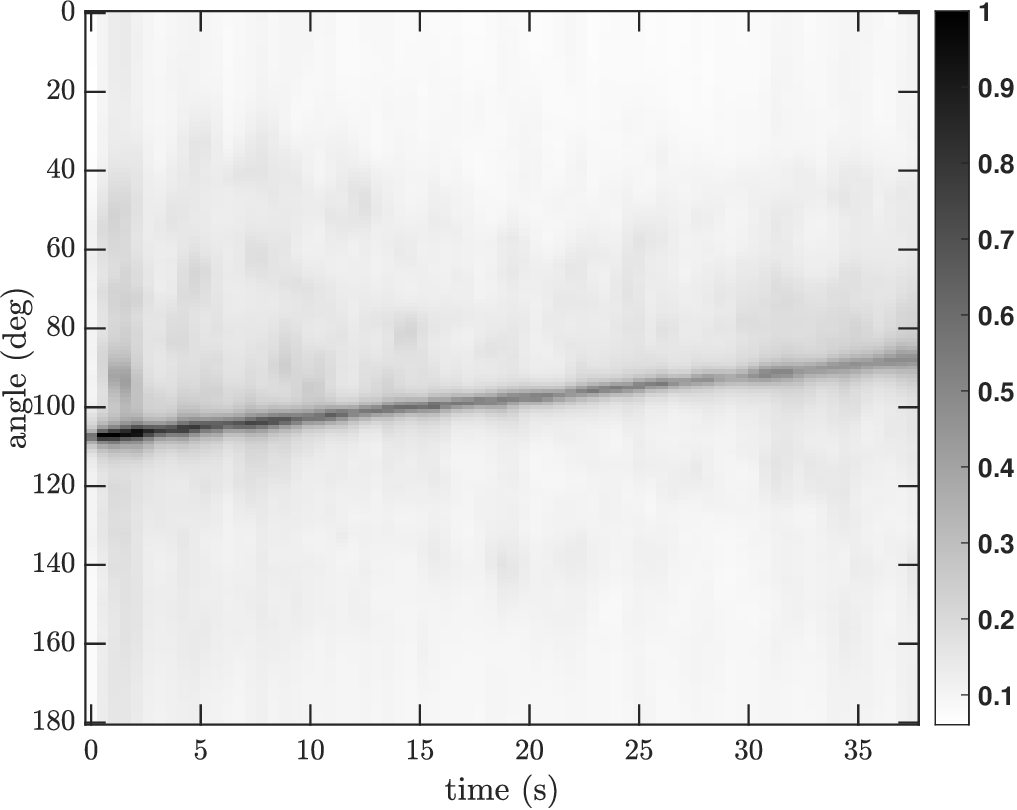}
           \caption{MVDR: spatial.}
            \label{fig:hydrophone_mvdr_spatial}
\end{subfigure}
\caption{Spectral estimates for hydrophone array data. }
\label{fig:hydrophone}
\end{figure}

\section{Conclusion}

In this work, we modeled multi-sensor broad-band signals by means of the concept of a spatio-temporal spectrum, and proposed to track the evolution of time-vaying spatio-temporal spectra by a group-sparse optimal transport formulation.
This allows us to fuse information across both separate time-instances and across frequency, and numerical experiments show that we achieve accurate estimates of the frequency content and location of broad-band signal sources.

\appendix

\subsection{Proof of Theorem~\ref{thm:dual}}

We introduce the auxiliary variables $\Delta_f^{(t)} =  \bG_f P_t(\bM_f) - \br_f^{(t)}$ and $\bPhi_f^{(t)} = P_t(\bM_f)$ for $f \in \indset{F}$, $t \in \indset{\cT}$ to define the Lagrangian 
\compress
\begin{equation*} 
\begin{aligned}
& \sum_{f=1}^F \langle \bC, \bM_f\rangle + \epsilon D(\bM_f) + \eta \sum_{t=1}^{T} \norm{\begin{bmatrix}\bPhi_1^{t},\ldots,\bPhi_f^{t} \end{bmatrix}}_{\infty,1} \\
& + \sum_{t=1}^T \sum_{f= 1}^F \bigg( \frac{\gamma}{2} \norm{\Delta_f^{(t)} }_2^2 +  (\lambda_f^{t})^T \left(\Delta_f^{(t)} -  \bG_f P_t(\bM_f) + \br_f^{(t)}\right) \\
& \hspace{50pt} +  (\bpsi_f^{t})^T \left(\bPhi_f^{(t)} - P_t(\bM_f) \right) \bigg).
\end{aligned} \compressmini
\end{equation*}
Minimizing this Lagrangian with respect to $\bM_f$ and $\Delta_f^{(t)}$ yields \eqref{eq:M} and $\Delta_f^{(t)} = -\frac{1}{\gamma} \lambda_f^{(t)}$ \cite[Proof of proposition~1]{elvander2020multi}.
For a fixed $t$, minimizing the Lagrangian with respect to $\bPhi_f^{(t)}$ for $f \in \indset{F}$ requires solving
\compress
\begin{equation*}
\minwrt[\bPhi_f^{(t)}, f \in \indset{F}] \ \ \eta \norm{\begin{bmatrix}\bPhi_1^{t},\ldots,\bPhi_f^{t} \end{bmatrix}}_{1,\infty} + \sum_{f=1}^F (\bpsi_f^{t})^T \bPhi_f^{(t)}. \compressmini
\end{equation*}
Note that this term can be treated separately for each element of the involved vectors. For each $i=1,\dots,N$, this becomes
\compress
\begin{equation*}
\begin{aligned}
& \infwrt[ {[\bPhi_f^{(t)}]_i, f \in \indset{F}} ]\ \  \eta\sup_{f \in \indset{F}} \{ [\bPhi_f^{(t)}]_i \}  + \sum_{f=1}^F ([\bpsi_f^{t}]_i)^T [\bPhi_f^{(t)}]_i\\
& = \begin{cases} 0, \text{ if } \sum_{f=1}^F (\bpsi_f^{(t)})_i \le \eta\\
-\infty, \text{ otherwise.}
\end{cases}
\end{aligned} \compressmini
\end{equation*}
Thus, in order for the dual to be bounded, the dual variables $\bpsi_f^{(t)}$ must satisfy $\norm{\begin{bmatrix}\bpsi_1^{(t)},\ldots,\bpsi_F^{(t)}\end{bmatrix}}_{1,\infty} \leq \eta $.
Plugging the optimal $\bM_f$ and $\Delta_f^{(t)}$ in the Lagrangian and using the constraints on $\bpsi_f^{(t)}$ results in the dual problem.

\subsection{Proof of Theorem~\ref{thm:psi}}

Note that problem \eqref{eq:update_psi} decouples in the vector indices. For each $i=1,\dots,N$, we solve a problem of the form
\begin{equation} \label{eq:psi_proof_opt} \compressmini
\begin{aligned}
 \minwrt[ \bx\in \RR^F ] \  \exp(- \bx )^T \bz , \; \text{subject to }  \| \bx \|_1   \leq p,
  \end{aligned} \compressmini
\end{equation}
where $\bx = - 1/\epsilon [ (\bpsi_1^{(t)})_i,\dots, (\bpsi_F^{(t)})_i ]^T$, $p=\eta / \epsilon$, and $\bz$ is defined as in step 1) of the theorem. As $\bz$ constructed in this way is non-negative, it is clear that the minimizer of \eqref{eq:psi_proof_opt} is non-negative and satisfies $  \| \bx \|_1  = p$.
Consider the Lagrangian of \eqref{eq:psi_proof_opt} with Lagrangian multiplier $\nu>0$, given by $\exp(- \bx )^T \bz + \nu(\| \bx \|_1 - p)$.
Let $\bs$ be a subgradient of $\|\bx\|_1$, then the Lagrangian is minimized with respect to $\bx$ if $- \exp(- \bx) \odot \bz + \nu \bs =0$.
From this we can conclude that 
the optimal $\bx$ is elementwise of the form $ x_f = \max(\log(z_f) - \log(\nu), 0)$.
We now find $\nu$ such that the optimal $\bx$ satisfies $ \| \bx \|_1  = p$. Therefore, note that the function
\compress
\begin{equation*}
g(\nu)  = \|\bx(\nu) \|_1 - p = -p + \sum_{f=1}^F \max\left(\log(z_f) - \log(\nu), 0\right) \compressmini
\end{equation*}
is continuous piecewise differentiable with non-differentiable points in $z_1,\dots,z_F$. Moreover, $g$ is strictly decreasing and thus has a root in the interval $(-\infty, \max \{  z_f , f \in \indset{F}\})$.
Letting the elements in $\bz$ be sorted in ascending order, the root lies in an interval $[z_f, z_{f+1}]$ for which $g(z_f)>0$ and $g(z_{f+1})\le0$.
As $g$ in this interval is given by \eqref{eq:g_func}, we get the root as 
\compress
\begin{equation*}
\nu = \exp \bigg(  \frac{1}{F-f} \Big( -c + \sum_{\ell = f+1}^F \log(z_\ell) \Big) \bigg). \compressmini
\end{equation*}
Step 4) in the theorem follows from plugging the root into $ x_f = \max(\log(z_f) - \log(\nu), 0)$.
\balance
\bibliographystyle{IEEEtran}

\bibliography{references}

\end{document}